\newtheorem{proposition}{Proposition}[]
\newtheorem{theorem}{Theorem}[]
\theoremstyle{definition}
\newtheorem{definition}{Definition}[]
\newtheorem{example}{Example}[]
\theoremstyle{plain}
\newcommand{\diam}{\operatorname{diam}}
\newcommand{\dis}{\operatorname{dis}}
\newcommand{\dist}{\operatorname{d}}
\renewcommand{\:}{\colon}
\renewcommand{\ss}{\subset}
\newcommand{\N}{\mathbb{N}}
\newcommand{\R}{\mathbb{R}}
\newcommand{\Z}{\mathbb{Z}}
\newcommand{\St}{\operatorname{St}}
\newcommand{\asdim}{\operatorname{asdim}}
\title{Calculating Gromov--Hausdorff distance by means of asymptotic dimension}
\author{Ivan N. Mikhailov}
\date{}
\begin{document}
\maketitle

\begin{abstract}
In this paper, we apply the concept of asymptotic dimension to calculating Gromov--Hausdorff distances between some unbounded metric spaces. In particular, we show that the Gromov--Hausdorff between $\R^2$ with the Euclidean metric and $\Z^2$ equals the Hausdorff distance between them: $d_{GH}(\R^2, \Z^2) = d_H(\R^2, \Z^2)$. 
\end{abstract}
% \begin{abstract}

%%%%%%%%%%%%%%%%%%%%%%%%%%%%%%

%%%%%%%%%%%%%%%%%%%%%%%%%%%%%%

\section{Introduction}

The Gromov--Hausdorff distance is an important notion from metric geometry that allows to compare any two abstract metric spaces to each other \cite{BBI}, \cite{TuzhilinHistory}. Very few exact values of Gromov--Hausdorff distances are known (see, for example, \cite{LMZ}, \cite{SpheresNew}, \cite{SegmentCircle}). In this paper we show how the concept of asymptotic dimension of a metric space can be applied to calculating Gromov--Hausdorff distance. For example, we show that $d_{GH}(\R^2, \Z^2) = d_H(\R^2, \Z^2)$.

\section{Preliminaries}

\emph{A metric space} is an arbitrary pair $(X,\,\dist_X)$, where $X$ is an arbitrary set, $\dist_X\: X\times X\to [0,\,\infty)$ is some metric on it, that is, a nonnegative symmetric, positively definite function that satisfies the triangle inequality.

For convenience, if it is clear in which metric space we are working, we denote the distance between points $x$ and $y$ by $|xy|$. Suppose $X$ is a metric space. By $U_r(a) =\{x\in X\colon |ax|<r\}$, $B_r(a) = \{x\in X\colon |ax|\le r\}$ we denote open and closed balls centered at point~$a$ of radius~$r$ in~$X$. For an arbitrary subset $A\subset X$ of a metric space~$X$, let $U_r(A) = \cup_{a\in A} U_r(a)$ be an open $r$-neighborhood of~$A$. For non-empty subsets $A \ss X$, $B \ss X$ we put $\dist(A,\,B)=\inf\bigl\{|ab|:\,a\in A,\,b\in B\bigl\}$.

\subsection{Hausdorff and Gromov--Hausdorff distances}

\begin{definition}
Let $A$ and $B$ be non-empty subsets of a metric space.
\emph{The Hausdorff distance} between $A$ and~$B$ is the quantity $$\dist_H(A,\,B) = \inf\bigl\{r > 0\colon A\subset U_r(B),\,B\subset U_r(A)\bigr\}.$$
\end{definition}
\begin{definition} Let $X$ and $Y$ be metric spaces. The triple $(X', Y', Z)$, consisting of a metric space $Z$ and its two subsets $X'$ and $Y'$, isometric to $X$ and~$Y$ respectively, is called \emph{a realization of the pair} $(X, Y)$.
\end{definition}

\begin{definition} \emph{The Gromov-Hausdorff distance} $d_{GH} (X, Y)$ between $X$ and~$Y$ is the exact lower bound of the numbers $r\ge 0$ for which there exists a realization $(X', Y', Z)$ of the pair $(X, Y)$ such that $\dist_H(X',\,Y') \le r$. 
\end{definition}

%Далее в работе, если не указано иное, штрихами будем обозначать образ того или иного объекта в объемлющем пространстве $Z$ для некоторой реализации.

Now let $X,\,Y$ be non-empty sets.  

\begin{definition} Each $\sigma\subset X\times Y$ is called a \textit{relation} between $X$ and~$Y$.
\end{definition}

By $\mathcal{P}_0(X,\,Y)$ we denote the set of all non-empty relations between $X$ and~$Y$.

We put $$\pi_X\colon X\times Y\rightarrow X,\;\pi_X(x,\,y) = x,$$ $$\pi_Y\colon X\times Y\rightarrow Y,\;\pi_Y(x,\,y) = y.$$ 

\begin{definition} A relation $R\subset X\times Y$ is called a \textit{correspondence}, if restrictions $\pi_X|_R$ and $\pi_Y|_R$ are surjective.
\end{definition}

Let $\mathcal{R}(X,\,Y)$ be the set of all correspondences between $X$ and~$Y$.

\begin{definition} Let $X,\,Y$ be metric spaces, $\sigma \in \mathcal{P}_0(X,\,Y)$. The \textit{distortion} of $\sigma$ is the quantity $$\dis \sigma = \sup\Bigl\{\bigl||xx'|-|yy'|\bigr|\colon(x,\,y),\,(x',\,y')\in\sigma\Bigr\}.$$
\end{definition}

\begin{proposition}[\cite{BBI}]  \label{proposition: distGHformula}
For arbitrary metric spaces $X$ and~$Y$, the following equality holds $$2\dist_{GH}(X,\,Y) = \inf\bigl\{\dis\,R\colon R\in\mathcal{R}(X,\,Y)\bigr\}.$$
\end{proposition}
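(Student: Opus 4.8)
The plan is to establish the two inequalities $2\dist_{GH}(X,Y)\le\inf\{\dis R:R\in\mathcal R(X,Y)\}$ and $\inf\{\dis R:R\in\mathcal R(X,Y)\}\le 2\dist_{GH}(X,Y)$ separately, using the standard dictionary that turns realizations into correspondences and correspondences into realizations.

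For the first inequality, fix $R\in\mathcal R(X,Y)$ and put $r=\tfrac12\dis R$ (assume $r>0$; if $\dis R=0$, run the same argument with an arbitrary $\varepsilon>0$ in place of $r$ and let $\varepsilon\to0$). I would glue $X$ and $Y$ along $R$: on $Z=X\sqcup Y$ let $d$ restrict to $\dist_X$ on $X$, to $\dist_Y$ on $Y$, and for $x\in X$, $y\in Y$ set
$$ d(x,y)=\inf\bigl\{\,|xx'|+r+|y'y|:(x',y')\in R\,\bigr\}. $$
The content is to check that $d$ is a metric. Symmetry is immediate, and the triangle inequalities not already inherited from $\dist_X,\dist_Y$ are exactly those with vertices in both $X$ and $Y$; expanding the infimum defining $d$, each of these collapses to the single estimate $\bigl||x'x''|-|y'y''|\bigr|\le\dis R=2r$ for $(x',y'),(x'',y'')\in R$, combined with the triangle inequality inside $X$ or $Y$. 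Positive-definiteness is automatic once $r>0$, since then $d(x,y)\ge r$ for $x\in X$, $y\in Y$, so no identifications occur and $X,Y$ sit isometrically in $Z$. Thus $(X,Y,Z)$ is a realization; and surjectivity of $\pi_X|_R$ and $\pi_Y|_R$ gives, for each $x\in X$, some $y$ with $d(x,y)\le r$ and vice versa, so $\dist_H(X,Y)\le r$ in $Z$. Hence $\dist_{GH}(X,Y)\le\tfrac12\dis R$, and taking the infimum over $R$ finishes this half.

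For the reverse inequality, take any realization $(X',Y',Z)$ of $(X,Y)$ and any $r>\dist_H(X',Y')$, so that $X'\subset U_r(Y')$ and $Y'\subset U_r(X')$. Set $R=\{(x,y)\in X'\times Y':|xy|_Z<r\}$; the two inclusions make $\pi_{X'}|_R$ and $\pi_{Y'}|_R$ surjective, so $R$, transported through the isometries $X\cong X'$ and $Y\cong Y'$, is a correspondence between $X$ and $Y$. For $(x,y),(x'',y'')\in R$ the triangle inequality in $Z$ gives $|xx''|\le|xy|+|yy''|+|y''x''|<|yy''|+2r$ and, symmetrically, $|yy''|<|xx''|+2r$, whence $\bigl||xx''|-|yy''|\bigr|<2r$ and $\dis R\le 2r$. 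Therefore $\inf\{\dis R:R\in\mathcal R(X,Y)\}\le 2r$ for every such $r$ and every realization, which yields $\le 2\dist_{GH}(X,Y)$.

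The main obstacle I expect is the metric verification in the first half: checking the mixed triangle inequalities for the glued function $d$ (routine but the real crux), together with the bookkeeping around the degenerate case $\dis R=0$ and the fact that the infimum defining $\dist_{GH}$ need not be attained — both handled by an $\varepsilon$-argument rather than by a single optimal choice. The second half is essentially one application of the triangle inequality in $Z$ once the correspondence has been written down.
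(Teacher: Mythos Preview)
The paper does not give its own proof of this proposition; it simply cites \cite{BBI} and states the result. Your argument is correct and is precisely the standard proof found in that reference: build a realization from a correspondence by the gluing metric $d(x,y)=\inf\{|xx'|+r+|y'y|:(x',y')\in R\}$ with $r=\tfrac12\dis R$, and conversely extract a correspondence $R=\{(x,y):|xy|_Z<r\}$ from a realization with $r>\dist_H(X',Y')$. The mixed triangle inequalities in the first half indeed reduce to $\bigl||x'x''|-|y'y''|\bigr|\le 2r$ as you say, and your handling of the degenerate case $\dis R=0$ and of the unattained infimum via an $\varepsilon$-argument is the right bookkeeping. Nothing to add.
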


Denote the set of all positive real numbers by $\R_{>0}$.

\begin{definition}
Following \cite{Stabilizers}, we define a \emph{stabilizer} of a metric space~$X$ as follows: $$\St X = \bigl\{\lambda \in\R_{>0}\: \lambda X\;\text{is isometric to}\;X\bigr\}.$$
\end{definition}

It is not difficult to show that $\St X$ is a subgroup in a multiplicative group $\R_{>0}$.

\subsection{Asymptotic dimension}

Let $X$ be an arbitrary metric space~$X$. 

\begin{definition}
It is said that the asymptotic dimension of~$X$ does not exceed~$n$ and written $\asdim X\le n$ iff for every uniformly bounded open cover $\mathcal{V}$ of~$X$ there is a uniformly bounded open cover~$\mathcal{U}$ of~$X$ of multiplicity~$\le(n+1)$ so that $\mathcal{V}$ refines $\mathcal{U}$. Then $\asdim X = n$ iff $\asdim X \le n$ and $\asdim X\, \cancel{\le}\, n-1$.
\end{definition}

We will need the equivalent definition of the asymptotic dimension \cite{asdim}[Theorem 19 (2), p.7].

\begin{definition}
The family $\mathcal{U} = \{U_\alpha\}_\alpha$ of subsets of a metric space~$X$ is said to be $r$-disjoint iff $d(U_\alpha, U_\beta) > r$ for all possible $\alpha\neq\beta$.
\end{definition}

\begin{definition}
We have $\asdim X\le n$ iff, for every $r < \infty$, there exist $r$-disjoint families $\mathcal{U}^0,\ldots \mathcal{U}^n$ of uniformly bounded subsets of~$X$ such that $\cup_i\mathcal{U}^i$ is a cover of~$X$.
\end{definition}

\begin{theorem}[\cite{LargeScaleGeometry}]\label{thm: asdimRn}
The following equality holds: $\asdim \R^n = n$.
\end{theorem}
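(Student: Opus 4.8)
The plan is to establish the two inequalities $\asdim\R^n\le n$ and $\asdim\R^n\ge n$ separately, working throughout with the $r$-disjoint families characterization recalled above.

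For the upper bound I would, given $r<\infty$, write down $n+1$ explicit families. Fix a scale $\lambda>(n+1)r$, put $\mathbf 1=(1,\dots,1)$, and for each colour $i\in\{0,1,\dots,n\}$ set
$$\mathcal U^i=\Bigl\{\,v\lambda+\tfrac{i}{n+1}\lambda\,\mathbf 1+\bigl[0,\tfrac{n}{n+1}\lambda\bigr)^n\ :\ v\in\Z^n\,\Bigr\}.$$
Every member is a half-open cube of diameter $\tfrac{n}{n+1}\lambda\sqrt n$, so $\bigcup_i\mathcal U^i$ is uniformly bounded; and two distinct cubes of the same colour have, in some coordinate, projections that are intervals of length $\tfrac{n}{n+1}\lambda$ whose left endpoints differ by a nonzero multiple of $\lambda$, hence are separated by a gap of at least $\lambda-\tfrac{n}{n+1}\lambda=\tfrac{\lambda}{n+1}>r$, so each $\mathcal U^i$ is $r$-disjoint. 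The one point that needs a computation is that these families cover $\R^n$: reducing each coordinate of $x/\lambda$ modulo $1$, colour $i$ fails to cover $x$ precisely when the fractional part of some coordinate of $x/\lambda$ lies in a certain half-open interval of length $\tfrac1{n+1}$ attached to $i$; as $i$ runs over $\{0,\dots,n\}$ these $n+1$ intervals partition $[0,1)$, so each of the $n$ coordinates forbids at most one colour, leaving at least one colour that covers $x$. This yields $\asdim\R^n\le n$. (Alternatively one could deduce it from the general product estimate $\asdim(X\times Y)\le\asdim X+\asdim Y$ together with the one-dimensional case, itself immediate from the analogous construction with two alternating families of intervals.)

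For the lower bound I would argue by contradiction, reducing to the classical fact that the Lebesgue covering dimension of an $n$-cube is $n$. Suppose $\asdim\R^n\le n-1$; applying the definition with $r=1$ gives $1$-disjoint families $\mathcal U^0,\dots,\mathcal U^{n-1}$ that cover $\R^n$, all of whose members have diameter at most some $M$. Passing to closures changes neither the pairwise distances, nor the diameter bound, nor the covering property, so we may assume all members are closed. Fix $R>M$ and intersect every member with the cube $Q=[0,R]^n$. Because each family is $1$-disjoint, no point of $Q$ lies in two members of the same family, so the resulting closed cover of $Q$ has multiplicity at most $n$; and since every member has diameter $\le M<R$, none of them meets two opposite faces of $Q$. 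By the Lebesgue covering theorem in its closed ``brick'' form, applied after rescaling $Q$ to $[0,1]^n$, some point of $Q$ must lie in at least $n+1$ of the members, contradicting multiplicity $\le n$. Hence $\asdim\R^n\ge n$, and combined with the upper bound, $\asdim\R^n=n$.

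The real content sits in the two ``one-line'' claims. On the upper-bound side it is the modular counting argument showing that $n+1$ suitably shifted grids of cubes cover $\R^n$, plus the bookkeeping making each colour class $r$-disjoint for the chosen $\lambda$; note that in this characterization one does not need the covering sets to be open. On the lower-bound side the genuine input is the topological statement that a closed cover of a cube by sets too small to join opposite faces has multiplicity exceeding the dimension — i.e. Lebesgue's theorem, equivalently a form of Brouwer's theorem; granting it, the coarse-geometric steps (thickening to closed sets, restricting to a large cube, bounding multiplicity by the number of families) are immediate. I expect the combinatorics of the $(n+1)$-colouring to be the main place where care is required.
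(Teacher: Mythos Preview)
The paper does not prove this theorem at all: it is stated with a reference to \cite{LargeScaleGeometry} and used as a black box, so there is no ``paper's own proof'' to compare against. Your proposal, by contrast, supplies a genuine self-contained argument, and it is essentially correct and standard.

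A couple of minor points you may want to tighten. In the upper bound, the covering claim is the right pigeonhole: for each coordinate $j$ the fractional part $\{x_j/\lambda\}$ singles out exactly one colour whose shifted cube misses $x$ in that coordinate, and since there are $n$ coordinates and $n+1$ colours some colour survives; your wording conveys this, just make the ``each coordinate forbids \emph{exactly} one colour'' step explicit. In the lower bound, the Lebesgue covering lemma is usually stated for \emph{finite} closed covers; you should remark that only finitely many members of each $1$-disjoint family can meet the bounded cube $Q$ (pick a point in each such member to get a $1$-separated subset of a bounded set), so the restricted cover is indeed finite. With these small additions the argument goes through cleanly and matches the usual textbook proofs of $\asdim\R^n=n$.
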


Since every two norms on a finite-dimensional linear space are equivalent to each other, we obtain the following enhancement:

\begin{theorem}\label{thm: asdimNorm}
For an arbitrary $n$-dimensional normed space $X$, we have \mbox{$\asdim X = n$.}
\end{theorem}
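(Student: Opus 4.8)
The plan is to reduce the statement to Theorem~\ref{thm: asdimRn} by exploiting the fact that asymptotic dimension is a coarse (large-scale) invariant, and that any two norms on a finite-dimensional real vector space are bi-Lipschitz equivalent as metrics. Concretely, let $X$ be an $n$-dimensional normed space with norm $\|\cdot\|_X$. Fixing a linear isomorphism $T\: X\to\R^n$, the pushforward of the norm gives a norm on $\R^n$, and by equivalence of norms there are constants $0<c\le C<\infty$ with $c\|v\|_2\le \|Tv\|_X\le C\|v\|_2$ for all $v$; equivalently, $T$ (viewed as a map between the metric spaces $(X,\dist_X)$ and $(\R^n,\dist_{\text{eucl}})$) is a bijective bi-Lipschitz equivalence. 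Since $T$ is a bijection, for any cover of one space its $T$-image is a cover of the other, multiplicity is preserved exactly, and diameters and mutual distances of sets are distorted by at most the factor $C/c$; hence uniform boundedness and $r$-disjointness are preserved up to rescaling $r$.

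First I would record the elementary lemma that if $f\: X\to Y$ is a bijective bi-Lipschitz map then $\asdim X=\asdim Y$, reading it off directly from the $r$-disjoint-families definition given just above: given $r$, to produce $r$-disjoint uniformly bounded families $\mathcal U^0,\dots,\mathcal U^n$ covering $Y$, pull back $Cr$-disjoint families covering $X$ (available since $\asdim X\le n$) through $f^{-1}$ and check the constants. Second, I would apply this to $T$ to conclude $\asdim X=\asdim\R^n$, and invoke Theorem~\ref{thm: asdimRn} to get $\asdim X=n$. This is the entire argument; no estimate beyond the two linear inequalities above is needed.

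I do not expect a genuine obstacle here — the only thing to be careful about is keeping the inequalities in the right direction when pulling families back and forth (a set that is $Cr$-disjoint upstairs becomes at least $Cr/C=r$-disjoint... actually at least $r$-disjoint only if one is careful about which Lipschitz constant multiplies which side), so I would state the bi-Lipschitz inequalities as $\dist_X(a,b)\le C\,\dist_{\R^n}(Ta,Tb)$ and $\dist_{\R^n}(Ta,Tb)\le C\,\dist_X(a,b)$ after absorbing $c$ into $C$, which makes the bookkeeping symmetric. One could alternatively cite the general fact that $\asdim$ is invariant under coarse equivalence, of which bi-Lipschitz equivalence is a special case, and then the proof collapses to a single sentence; I would include the short self-contained argument for completeness since the paper has only introduced the bare definitions of $\asdim$.

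\begin{proof}
Let $X$ be an $n$-dimensional normed space and fix a linear isomorphism $T\: X\to\R^n$. Transporting the norm of $X$ to $\R^n$ via $T$ produces a norm on $\R^n$; since any two norms on a finite-dimensional space are equivalent, there is a constant $C\ge 1$ such that
$$\dist_X(a,b)\le C\,\dist_{\R^n}\bigl(Ta,Tb\bigr)\quad\text{and}\quad \dist_{\R^n}\bigl(Ta,Tb\bigr)\le C\,\dist_X(a,b)$$
for all $a,b\in X$, where $\dist_{\R^n}$ is the Euclidean metric. In particular $T$ is a bijection that distorts all distances by at most the factor $C$ in either direction.

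We claim $\asdim X=\asdim\R^n$; by symmetry it suffices to show $\asdim\R^n\le n\Rightarrow\asdim X\le n$, and the reverse implication is identical with the roles of $X$ and $\R^n$ interchanged. Assume $\asdim\R^n\le n$ and fix $r<\infty$. Applying the definition of asymptotic dimension to $\R^n$ with the parameter $Cr$, we obtain $Cr$-disjoint families $\mathcal V^0,\dots,\mathcal V^n$ of uniformly bounded subsets of $\R^n$ whose union covers $\R^n$. For each $i$ set $\mathcal U^i=\{\,T^{-1}(V)\: V\in\mathcal V^i\,\}$. Since $T$ is a bijection, $\cup_i\mathcal U^i$ is a cover of $X$. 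If $U=T^{-1}(V)$ and $U'=T^{-1}(V')$ are distinct members of some $\mathcal U^i$, then for $a\in U$, $b\in U'$ we have $\dist_X(a,b)\ge \dist_{\R^n}(Ta,Tb)/C>Cr/C=r$, so $\dist_X(U,U')\ge r$ and $\mathcal U^i$ is $r$-disjoint. Finally, if $V$ has diameter $\le D$ in $\R^n$ then $T^{-1}(V)$ has diameter $\le CD$ in $X$, so each $\mathcal U^i$ is uniformly bounded. Hence $\asdim X\le n$.

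Combining the claim with Theorem~\ref{thm: asdimRn} gives $\asdim X=\asdim\R^n=n$.
\end{proof}
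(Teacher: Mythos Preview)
Your proof is correct and follows exactly the approach the paper indicates: it invokes the equivalence of norms on a finite-dimensional space to obtain a bi-Lipschitz identification with Euclidean $\R^n$ and then appeals to Theorem~\ref{thm: asdimRn}. The paper states this as a one-line remark, whereas you have spelled out the routine verification that bi-Lipschitz bijections preserve $\asdim$; nothing is missing or different in substance.
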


\clearpage

\section{Main theorem}

\begin{theorem}\label{thm: asdimGH}
Let $X$ be a metric space with $\asdim X\ge n$ and $\St X\neq \{e\}$. Suppose in a metric space $A$ there exist $r$-disjoint families $\mathcal{U}^1,\ldots, \mathcal{U}^k$, $1\le k\le n$ of uniformly bounded subsets of $A$ such that $A\ss\cup_i\mathcal{U}^i$. Then $d_{GH}(A, X)\ge \frac{r}{2}$.
\end{theorem}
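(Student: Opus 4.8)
The plan is to argue by contradiction, transporting the disjoint families from $A$ to $X$ along a near-optimal correspondence and then amplifying their separation constant via the nontrivial stabilizer. Suppose $d_{GH}(A,X)<\frac r2$. By Proposition~\ref{proposition: distGHformula} there is a correspondence $R\in\mathcal{R}(A,X)$ with $\dis R<r$; set $s=r-\dis R>0$. For each $i\in\{1,\dots,k\}$ and each $U\in\mathcal{U}^i$ define its $R$-image
$$\widehat U=\{x\in X\colon (a,x)\in R\ \text{for some}\ a\in U\},$$
and put $\widehat{\mathcal U}^i=\{\widehat U\colon U\in\mathcal{U}^i\}$. The idea is that these families inherit the disjointness structure of the $\mathcal{U}^i$ up to the loss controlled by $\dis R$, and that the self-similarity of $X$ then removes this loss entirely.

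Next I would verify three properties of the families $\widehat{\mathcal U}^i$. (a) They cover $X$: since $\pi_X|_R$ is surjective, every $x\in X$ satisfies $(a,x)\in R$ for some $a\in A$, and $a$ lies in some $U\in\mathcal{U}^i$, whence $x\in\widehat U$. (b) They are uniformly bounded: if $\diam U\le D$ for all $U\in\bigcup_i\mathcal{U}^i$, then for $x,x'\in\widehat U$ with witnesses $a,a'\in U$ one has $|xx'|\le|aa'|+\dis R\le D+r$. (c) Each $\widehat{\mathcal U}^i$ is $s$-disjoint: for $U\neq U'$ in $\mathcal{U}^i$ and $x\in\widehat U$, $x'\in\widehat{U'}$ with witnesses $a\in U$, $a'\in U'$ we have $|aa'|\ge\dist(U,U')>r$, hence $|xx'|\ge|aa'|-\dis R>r-\dis R=s$. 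All three are direct consequences of the definition of distortion.

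Now comes the essential use of the hypothesis $\St X\neq\{e\}$: choose $\lambda\in\St X$ with $\lambda\neq1$, and, replacing $\lambda$ by $\lambda^{-1}$ if needed (legitimate since $\St X$ is a group), assume $\lambda>1$. For every $m\in\N$ the space $\lambda^m X$ is isometric to $X$; transporting the families $\widehat{\mathcal U}^i$ through the rescaling multiplies their separation constant to $\lambda^m s$ and their diameter bound to $\lambda^m(D+r)$, so $X$ itself admits a cover by $k\le n$ uniformly bounded $\lambda^m s$-disjoint families. Since $\lambda^m s\to\infty$, for every $r_0<\infty$ the space $X$ has a cover by at most $n$ uniformly bounded $r_0$-disjoint families (pad with empty families to get exactly $n$ of them). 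By the equivalent definition of asymptotic dimension this forces $\asdim X\le k-1\le n-1$, contradicting $\asdim X\ge n$. Hence $d_{GH}(A,X)\ge\frac r2$.

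The only genuine obstacle is the step just described: a correspondence with $\dis R<r$ a priori yields only the possibly tiny separation $s=r-\dis R$, whereas the asymptotic-dimension lower bound of $X$ can only be contradicted by covers with arbitrarily large separation. It is precisely the existence of a nontrivial element of $\St X$ that bridges this gap, by allowing one to rescale a fixed good cover of $X$ to obtain good covers with separation $\lambda^m s$ for all $m$. Everything else is routine bookkeeping in translating between correspondences and the $r$-disjoint-families description of $\asdim$.
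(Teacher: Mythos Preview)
Your proof is correct and follows essentially the same approach as the paper's: push the $r$-disjoint families through a correspondence with $\dis R<r$ to obtain $k$ uniformly bounded $(r-\dis R)$-disjoint families covering $X$, then use a nontrivial $\lambda\in\St X$ with $\lambda>1$ to rescale and contradict $\asdim X\ge n$. Your write-up is in fact slightly more careful than the paper's (you explicitly justify taking $\lambda>1$ via the group structure of $\St X$ and note the padding with empty families), but the argument is the same.
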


\begin{proof}
Consider an arbitrary correspondence $R\in\mathcal{R}(A, X)$. Suppose that $\dis R < r$. Then there exists some $\varepsilon > 0$ such that $\dis R < r-\varepsilon$. 

Let $\mathcal{U}^i = \{U_\alpha^i\}_\alpha$. We put $V^i_\alpha = R(U^i_\alpha)$ and $\mathcal{V}^i =  \{V^i_\alpha\}_\alpha$. 

Since the families $\mathcal{U}^i$, $i = 1,\ldots, k$ consist of uniformly bounded subsets of $A$, there exists $C > 0$ such that $\diam U^i_\alpha < C$ for arbitrary $i$, $\alpha$. Since $\dis R < r -\varepsilon$, we obtain $\diam V^i_\alpha < C+ r - \varepsilon$. Therefore, the families $\mathcal{V}^i$ consist of uniformly bounded subsets of $X$. 

By condition, for arbitrary possible $\alpha\neq \beta$ and arbitrary~$i$, we have $d(U^{i}_\alpha, U^i_\beta) > r$. Since $\dis R < r-\varepsilon$, we obtain $d(V^i_\alpha, V^i_\beta)> \varepsilon$. Therefore, the families $\mathcal{V}^1,\ldots, \mathcal{V}^k$ are $\varepsilon$-disjoint in $X$. 

Since $R$ is a correspondence and $A = \cup_i \mathcal{U}^i$, we obtain $X = \cup_i \mathcal{V}^i$.

Since $\St X \neq \{e\}$, there exists $\lambda\in \St X$ such that $\lambda > 1$. Consider the families $\lambda^n\mathcal{V}^1,\ldots \lambda^n\mathcal{V}^k$ of subsets of $X$ for $n\in \N$. These families are $(\lambda^n\varepsilon)$-separated and consist of subsets whose diameters are bounded by $\lambda^n(C+r-\varepsilon)$ from above. 

Now we observe that due to the arbitrariness of $n\in\N$ we have obtained $$\asdim X \le k-1 < n $$ by the definition of the asymptotic dimension. This is a contradiction.
\end{proof}

\section{Various $\varepsilon$-nets in $\Z^2$}

\begin{example}

We show that $d_H(\Z^2, \R^2) = d_{GH}(\Z^2,\R^2)$ for $\R^2$ with the Euclidean metric. 

Since $d_{GH}(\Z^2,\R^2)\le d_H(\Z^2,\R^2) = \frac{\sqrt{2}}{2}$, it suffices to prove that for an arbitrary correspondence $R\in\mathcal{R}(\Z^2,\,\R^2)$ the inequality $\dis R\ge\sqrt{2}$ holds. 

Consider chess colouring of $\Z^2$ (see Figure~\ref{fig: ChessZ}).

\begin{figure}[h]
    \centering
    \includegraphics[width=0.5\linewidth]{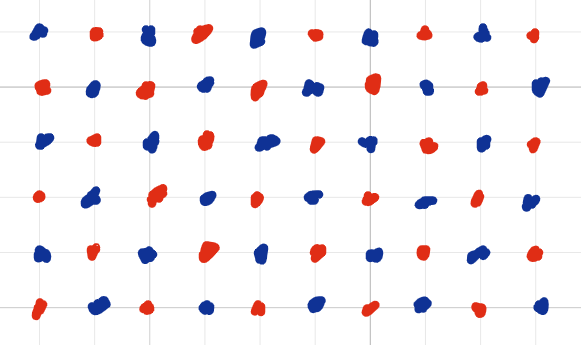}
    % \caption{Picture~\ref{fig: ChessZ}}
    \caption{}
    \label{fig: ChessZ}
\end{figure}

Note that the families of red points and blue points are both $\sqrt{2}$-disjoint. Thus, by Theorem~\ref{thm: asdimGH}, we obtain that $d_{GH}(\Z^2, \R^2)\ge d_H(\Z^2, \R^2) = \frac{\sqrt{2}}{2}$. Hence, $d_{GH}(\Z^2, \R^2)= d_H(\Z^2, \R^2)$.
    
\end{example}

\begin{example}

Consider the following subset $A$ of $\R^2$ with the Euclidean metric $$A = \bigl\{(x, 0)\: x\in\R\bigr\}\cup \cup_{n\in \Z}\bigl\{(n, y)\:y\in\R\bigr\}.$$ It is clear that $d_{GH}(A, \R^2)\le d_H(A, \R^2) = \frac{1}{2}$. 

We can cover $A$ with two families of uniformly bounded subsets such that these families are $1$-disjoint (see Figure~\ref{example2} below). Thus, $d_{GH}(A, \R^2)\ge \frac{1}{2}$ and, hence, $d_{GH}(A, \R^2) = d_H(A, \R^2) = \frac{1}{2}$.

\begin{figure}[h]
    \centering
    \includegraphics[width=0.8\linewidth]{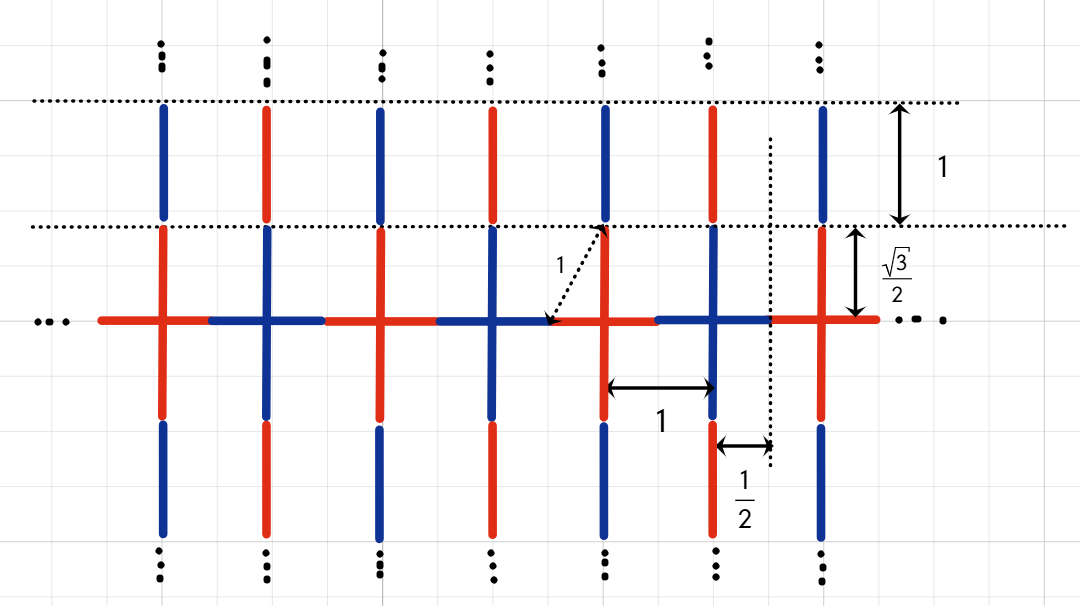}
    \caption{}
    \label{example2}
\end{figure}
    
\end{example}

%%%%%%%%%%%%%%%%%%%%%%%%%%%%%%%%%%%%

\end{document}